\begin{document}

\theoremstyle{plain}
  \newtheorem{theorem}{Theorem}[section]
  \newtheorem{proposition}[theorem]{Proposition}
  \newtheorem{lemma}[theorem]{Lemma}
  \newtheorem{corollary}[theorem]{Corollary}
  \newtheorem{conjecture}[theorem]{Conjecture}
\theoremstyle{definition}
  \newtheorem{definition}[theorem]{Definition}
  \newtheorem{example}[theorem]{Example}
  \newtheorem{observation}[theorem]{Observation}
  \newtheorem{observations}[theorem]{Observations}
  \newtheorem{question}[theorem]{Question}
 \theoremstyle{remark}
  \newtheorem{remark}[theorem]{Remark}

\numberwithin{equation}{section}
\def\ZZ{{\mathbb Z}}
\def\QQ{{\mathbb Q}}
\def\CC{{\mathbb C}}
\def\NN{{\mathbb N}}

\def\Kbar{{\overline{K}}}

\def\Y{{\mathbf Y}}
\def\SY{{S\mathbf Y}}
\def\YF{{\mathbf Y F}}
\def\il{{\text{Smith-eigenvalue}}}
\def\pd{{\frac{\partial}{\partial p_1}}}
\def\K{{\mathbb C}}
\def\E{{\mathcal E}}
\def\A{{\mathscr A}}
\def\P{{\mathbf P}}
\def\Q{{\mathbf Q}}
\def\v{v}
\def\GL{{\mathrm{GL}}}

\def\symm{\mathfrak{S}}

\def\coker{\mathrm{coker}}
\def\im{\mathrm{im}}
\def\diag{\mathrm{diag}}
\def\rank{\mathrm{rank}}
\def\Res{\mathrm{Res}}
\def\Ind{\mathrm{Ind}}
\def\triv{t}
\def\Irr{\mathrm{Irr}}
\def\sgn{s}
\def\ones{{\mathit{ones}}}
\def\Mat{\mathrm{Mat}}

\def\Class{{\mathbf{Cl}}}

\title[Differential posets have strict rank growth]{Differential posets have strict rank growth:\\
a conjecture of Stanley}
\author{Alexander R. Miller}
\email{mill1966@math.umn.edu}
\address{ School of Mathematics\\
University of Minnesota\\
Minneapolis, MN 55455}

\begin{abstract}  
We establish strict growth for the rank function of an 
$r$-differential poset.  We do so by exploiting the representation theoretic 
techniques developed by Reiner and the author~\cite{MR} for studying related Smith forms.
\end{abstract}

\thanks{Supported by NSF grant DMS-1001933.}
\maketitle

\section{Introduction}
For a positive integer $r$, an \emph{$r$-differential poset} is a graded poset $P$ with a minimum 
element, having all intervals and all rank cardinalities finite, satisfying 
\begin{enumerate}[(D1)]
\item  If an element of $P$ covers $m$ others, then it will be covered by $m+r$ others.
\item  If two elements of $P$ have exactly $m$ elements that they both cover, then there will be exactly $m$ 
elements that cover them both.
\end{enumerate}
We write $P_n$ for the $n$th rank of $P$ and set $p_n:=|P_n|$.  
  Considering the free $\mathbb Z$-module $\mathbb ZP_n\cong \mathbb Z^{p_n}$ generated by the elements of $P_n$, 
define the \emph{up} and \emph{down} maps
\begin{align*}
&U_n(=D_{n+1}^t):\mathbb ZP_n\to\mathbb ZP_{n+1}\\
&D_n(=U_{n-1}^t):\mathbb ZP_n\to\mathbb ZP_{n-1}
\end{align*}
in which a basis element is sent by $U_n$ (resp. $D_n$) to the sum of all elements that cover (resp. are covered by) it.  We shall 
often omit subscripts when the domain is clear.  Setting
\[UD_n:=U_{n-1}D_n\quad\text{and}\quad DU_n:=D_{n+1}U_n,\]
conditions (D1,D2) can be rephrased as saying that
\[DU_n-UD_n=rI\quad\text{for each $n\geq 1$}.\]

The following result of Stanley concerning the spectra of $DU_n$ is central to the theory 
and plays an important role in our analysis.

\begin{theorem}[Stanley~\cite{S}]\label{eig}  Let $P$ be an $r$-differential poset.  Then 
\[\det(DU_n+tI)=\prod_{i=0}^n (t+r(i+1))^{\Delta p_{n-i}},\]
where $\Delta p_n:=p_n-p_{n-1}$.
\end{theorem}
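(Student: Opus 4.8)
The plan is to compute the entire spectrum of $DU_n$ by induction on $n$, leveraging three facts: that $DU_n$ and $UD_n$ are Gram operators, that $A^tA$ and $AA^t$ share their nonzero spectra, and that the defining relation shifts one operator into the other. First I would unwind the transposes. Since $U_n=D_{n+1}^t$ gives $D_{n+1}=U_n^t$, we get $DU_n=D_{n+1}U_n=U_n^tU_n$, and likewise $UD_n=U_{n-1}D_n=U_{n-1}U_{n-1}^t$; thus both are positive semidefinite and symmetric. Because $DU_n-UD_n=rI$ with $r>0$, the two operators are simultaneously diagonalizable and every eigenvalue of $DU_n$ equals an eigenvalue of $UD_n$ plus $r$, hence is at least $r>0$. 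In particular $U_n^tU_n$ is positive definite, so $U_n$ is injective and $\rank U_n=p_n$ for all $n$.

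Working on $\CC P_n$, I would then pin down the kernel: since $\ker(U_{n-1}U_{n-1}^t)=\ker U_{n-1}^t=\ker D_n$ for a positive semidefinite product, the multiplicity of the eigenvalue $0$ of $UD_n$ equals $\dim\ker D_n=p_n-\rank U_{n-1}=p_n-p_{n-1}=\Delta p_n$. For the nonzero part, I would invoke the standard fact that for any rectangular matrix $A$ the products $A^tA$ and $AA^t$ have identical nonzero eigenvalues with identical multiplicities. Applied to $A=U_{n-1}$, this identifies the nonzero spectrum of $UD_n=U_{n-1}U_{n-1}^t$ with the spectrum of $DU_{n-1}=U_{n-1}^tU_{n-1}$ (all of whose eigenvalues are already nonzero, being $\ge r$). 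So the spectrum of $UD_n$ is exactly the spectrum of $DU_{n-1}$ together with $0$ of multiplicity $\Delta p_n$, and adding $rI$ shows the spectrum of $DU_n$ is that of $DU_{n-1}$ shifted up by $r$, with a fresh eigenvalue $r$ of multiplicity $\Delta p_n$ adjoined.

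Now the induction runs cleanly. The base case $n=0$ is immediate: $D_0=0$ forces $UD_0=0$, so $DU_0=rI$ on the one-dimensional space $\CC P_0$ and $\det(DU_0+tI)=t+r=(t+r)^{\Delta p_0}$ since $\Delta p_0=1$. For the step, assume $DU_{n-1}$ has eigenvalues $r(j+1)$ of multiplicity $\Delta p_{(n-1)-j}$ for $0\le j\le n-1$. Shifting by $r$ sends $r(j+1)\mapsto r(j+2)$, and reindexing $i=j+1$ gives eigenvalues $r(i+1)$ of multiplicity $\Delta p_{n-i}$ for $1\le i\le n$; adjoining the new eigenvalue $r=r(0+1)$ of multiplicity $\Delta p_{n-0}$ completes the list for $0\le i\le n$. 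Forming $\det(DU_n+tI)=\prod_\mu(t+\mu)$ over these eigenvalues then yields the claimed product, and a quick check that the total multiplicity $\sum_{i=0}^n\Delta p_{n-i}=p_n$ confirms the full space is accounted for.

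The main obstacle I anticipate is purely a matter of bookkeeping the multiplicity of the lowest eigenvalue $r$ and ruling out collisions. This is precisely where positivity does the work: the nonzero eigenvalues of $UD_n$ equal those of $DU_{n-1}$, which are $\ge r$ by induction, so after the shift they land at values $\ge 2r$ and stay disjoint from the new eigenvalue $r$ arising from $\ker D_n$. Establishing the injectivity of $U_n$ at the outset is what guarantees that $\dim\ker D_n$ is exactly $\Delta p_n$ rather than something larger, and hence that the induction reproduces the exponents $\Delta p_{n-i}$ without overcounting.
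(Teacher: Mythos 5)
Your proof is correct. Note that the paper never proves this statement---it is quoted as Theorem~\ref{eig} from Stanley~\cite{S} and used as a black box---so there is no internal proof to compare against; your argument (writing $UD_n=U_{n-1}U_{n-1}^t$ and $DU_{n-1}=U_{n-1}^tU_{n-1}$, matching their nonzero spectra, identifying $\dim\ker D_n=\Delta p_n$ via positive-definiteness of $DU_{n-1}$, then shifting by $rI$ and inducting) is essentially Stanley's original one, and all the steps, including the base case $DU_0=rI$ and the multiplicity bookkeeping, check out.
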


Consequently, the rank sizes of a differential poset $P$ weakly increase, that is $p_0\leq p_1\leq p_2\leq \cdots$.  Though 
the growth of the rank function was recently studied in~\cite{SZ}, an answer to the basic initial question of Stanley asking whether the 
rank sizes {\it strictly} increase has remained elusive; see~\cite{MR,S,SZ}.  The purpose of this paper is to resolve this question by establishing

\begin{conjecture}[Stanley~\cite{S}]  An $r$-differential poset $P$ has $p_1<p_2<p_3<\cdots$.
\end{conjecture}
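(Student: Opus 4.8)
The plan is to reduce the conjecture to a statement about a single eigenvalue and then extract a contradiction from integrality. Since Theorem~\ref{eig} forces all multiplicities $\Delta p_{n-i}$ to be nonnegative, we already have $p_0\le p_1\le\cdots$, so it suffices to rule out equality, i.e.\ to show $\Delta p_n:=p_n-p_{n-1}>0$ for every $n\ge 2$. Reading off the factor with $i=0$ in Theorem~\ref{eig}, the multiplicity of the eigenvalue $r$ of $DU_n$ is exactly $\Delta p_n$; hence $\Delta p_n=0$ is equivalent to $UD_n=DU_n-rI$ being nonsingular over $\mathbb{Q}$. Because $UD_n=U_{n-1}D_n=U_{n-1}U_{n-1}^t$ factors through $\mathbb{Q}P_{n-1}$, its nonsingularity forces $p_{n-1}\ge p_n$, and combined with weak monotonicity this gives $p_{n-1}=p_n$ and shows that the integer matrix $U_{n-1}\colon\mathbb{Z}P_{n-1}\to\mathbb{Z}P_n$ is \emph{square} and invertible over $\mathbb{Q}$. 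I would assume such an $n$ exists and work toward a contradiction.

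The first payoff of squareness is a perfect-square constraint. Since $U_{n-1}$ is now a square integer matrix, $\det U_{n-1}\in\mathbb{Z}$ and
\[\det(U_{n-1})^2=\det\!\big(U_{n-1}^t U_{n-1}\big)=\det(DU_{n-1})=\prod_{k=1}^{n}(rk)^{\Delta p_{n-k}},\]
the last equality being Theorem~\ref{eig} at $t=0$ (reindexed by $k=i+1$). Thus $\prod_{k=1}^{n}(rk)^{\Delta p_{n-k}}$ must be a perfect square. For the base case $n=2$ this already closes: the product equals $r^{\Delta p_1}(2r)^{\Delta p_0}=2r^{r}$ (using $p_0=1$, $p_1=r$), whose $2$-adic valuation $1+r\,v_2(r)$ is odd regardless of the parity of $r$. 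Hence $2r^r$ is never a square, so $\Delta p_2>0$ and $p_1<p_2$.

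For a general collapse the strategy is to locate a prime $\ell$ at which $\prod_{k=1}^{n}(rk)^{\Delta p_{n-k}}$ has odd valuation. When $n$ is prime and $n\nmid r$, taking $\ell=n$ isolates the single term $\Delta p_0=1$ and wins immediately; in general I would invoke Bertrand's postulate to choose a prime $\ell$ with $n/2<\ell\le n$, so that $\ell$ divides only $k=\ell$ among $1,\dots,n$ and the valuation collapses to $p_{n-1}v_\ell(r)+\Delta p_{n-\ell}$. \textbf{The main obstacle} is that this parity need not be visibly odd: the determinant is too coarse an invariant, and for suitable parities of the intermediate $\Delta p_j$ the product genuinely can be a perfect square. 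To defeat these residual cases one must descend to finer data, which is where the Smith-form techniques of Reiner and the author~\cite{MR} enter. Concretely, I would reduce mod $\ell$ and compare the two square reductions $\overline{UD_n}=\bar U_{n-1}\bar U_{n-1}^t$ and $\overline{DU_{n-1}}=\bar U_{n-1}^t\bar U_{n-1}$: they share a characteristic polynomial (Theorem~\ref{eig} mod $\ell$), but over $\mathbb{F}_\ell$ the presence of isotropic vectors lets their kernels---equivalently the $\ell$-parts of the elementary divisors of $U_{n-1}$---deviate from what the characteristic polynomial alone would predict. The crux, and the step I expect to be hardest, is to compute the resulting $\mathbb{F}_\ell$-module (Smith normal form) of $U_{n-1}$ through the representation-theoretic machinery of~\cite{MR} and show that it is incompatible with $U_{n-1}$ being a $\mathbb{Q}$-isomorphism, thereby forcing $\Delta p_n\ge 1$ and completing the chain $p_1<p_2<p_3<\cdots$.
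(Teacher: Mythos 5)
Your reduction is correct and your base case is genuinely nice: if $\Delta p_n=0$ then indeed $UD_n=DU_n-rI$ is nonsingular by Theorem~\ref{eig}, so $U_{n-1}$ is square and invertible over $\mathbb{Q}$, forcing $\det(DU_{n-1})=\prod_{k=1}^{n}(rk)^{\Delta p_{n-k}}$ to be a perfect square; for $n=2$ this product is $2r^r$, whose $2$-adic valuation $1+r\,v_2(r)$ is odd for every $r$, so $p_1<p_2$. But for $n\geq 3$ the argument stops being a proof, and you say so yourself: the perfect-square condition is a single parity constraint on the integers $\Delta p_0,\dots,\Delta p_{n-1}$, and nothing in the differential-poset axioms prevents those parities from conspiring to make the product a square (e.g.\ all intermediate $\Delta p_j$ even kills every prime you could choose, Bertrand or otherwise). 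The remaining plan --- compute the Smith form of $U_{n-1}$ over $\mathbb{F}_\ell$ for a prime $\ell\leq n$ ``through the representation-theoretic machinery of~\cite{MR}'' and derive a contradiction --- is a hope, not an argument; no mechanism is given that would actually produce the incompatibility. That is the genuine gap, and it is exactly where the difficulty of the conjecture lives.

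It is worth contrasting with what the paper does, because the working mechanism is structurally different from your sketch: instead of analyzing $U_{n-1}$ at a small prime dividing the spectrum of $DU_{n-1}$, the paper shifts the operator and works at a \emph{large} prime. Using two chains $\triv_0\lessdot\triv_1\lessdot\cdots$ and $\sgn_0\lessdot\sgn_1\lessdot\cdots$ of elements each covering at most one element (Corollary~\ref{Cor:Def}), one writes down the explicit vector $\v_{n,k}=\sum_{j=0}^{n}(-1)^j U^j\triv_{n-j}/(j+1)!_{r,k}$ and checks $(DU_n+kI)\v_{n,k}=\triv_n$ (Proposition~\ref{Prop:Fundamental}), so $\v_{n,k}$ is a column of $(DU_n+kI)^{-1}$; pairing against $\sgn_n$ shows its $\sgn_n$-coordinate has denominator exactly $(n+1)!_{r,k}$ (suitably modified when $r=1$), whence the last Smith entry of $DU_n+kI$ is divisible by $r+k$ (Theorem~\ref{theorem:last-smith-entry}). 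Choosing $k$ so that $r+k$ is a prime larger than $(n+1)r$, this prime divides $\det(DU_n+kI)=\prod_{i=0}^{n}((i+1)r+k)^{\Delta p_{n-i}}$ yet can divide only the factor $(r+k)^{\Delta p_n}$, forcing $\Delta p_n>0$. The two ideas your proposal is missing are precisely these: the freedom to shift by $kI$ so as to place a controlled prime in the spectrum, and the explicit two-chain construction that pins down a denominator in a single column of the inverse. Without them, the determinant (or even the mod-$\ell$ Smith form of $U_{n-1}$) is, as you suspected, too coarse to finish the job.
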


The techniques of this paper were inspired by those developed by Reiner and the author in~\cite{MR} for towers of group algebras, 
to which we refer the reader for additional background, notation, and motivation.  The key observation being that one can mimic enough 
of the character theory used in~\cite{MR} with just two distinct elements $t,s\in P_n$ having $D^nt=D^ns=\hat{0}$, 
the bottom element, for $n\geq 2$.

\section{The proof}

We start with the following fundamental observation.

\begin{proposition}[Miller-Reiner~\cite{MR}]
\label{prop:join-irreducible-chains}
Let $P$ be an $r$-differential poset, and $j_{m-1} \lessdot j_m$ any
covering pair in $P$ with the property that 
$j_{m-1}, j_m$ both cover at most one element of $P$.  

Then for any integer $n \geq m$ one can extend this to a saturated chain 
$$
j_{m-1} \lessdot j_m \lessdot j_{m+1} \lessdot \cdots \lessdot j_{n-1} \lessdot j_n
$$
in which each $j_\ell$ covers at most one element of $P$.
\end{proposition}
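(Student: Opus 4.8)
The plan is to prove the proposition by induction on the top rank, reducing everything to a single upward step: given any covering pair $j_{\ell-1} \lessdot j_\ell$ in which both $j_{\ell-1}$ and $j_\ell$ cover at most one element of $P$, I will produce an element $j_{\ell+1} \gtrdot j_\ell$ that again covers at most one element. Starting from the hypothesized pair $j_{m-1} \lessdot j_m$ and iterating this step yields the saturated chain up to rank $n$, because after each step the new top pair $(j_\ell, j_{\ell+1})$ once more consists of two elements each covering at most one element.

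To carry out the step I would apply the commutation relation $DU_\ell - UD_\ell = rI$ to the basis vector $j_\ell \in \mathbb ZP_\ell$. Since $j_\ell$ covers $j_{\ell-1}$ it covers at least one element, and by hypothesis at most one; hence it covers exactly one, so $D(j_\ell) = j_{\ell-1}$ and therefore $UD(j_\ell) = U(j_{\ell-1}) = \sum_{w \gtrdot j_{\ell-1}} w$. On the other hand $DU(j_\ell) = \sum_{x \gtrdot j_\ell} D(x)$. The relation thus reads, in $\mathbb ZP_\ell$, \[\sum_{x \gtrdot j_\ell} D(x) \;-\; \sum_{w \gtrdot j_{\ell-1}} w \;=\; r\,j_\ell.\]

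The crucial move is to apply the augmentation $\epsilon\colon \mathbb ZP_\ell \to \mathbb Z$ sending each basis element to $1$, which converts both sides into counts of covering relations. Writing $c(x)$ for the number of elements covered by $x$, the identity becomes $\sum_{x \gtrdot j_\ell} c(x) - |\{w : w \gtrdot j_{\ell-1}\}| = r$, that is $\sum_{x \gtrdot j_\ell} c(x) = r + |\{w : w \gtrdot j_{\ell-1}\}|$. Now (D1) supplies the two sizes I need: because $j_{\ell-1}$ covers at most one element it is covered by at most $r+1$ elements, so the right-hand side is at most $2r+1$; and because $j_\ell$ covers exactly one element it is covered by exactly $r+1$ elements, so the left-hand sum runs over exactly $r+1$ terms, each $\geq 1$.

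The conclusion is then immediate by pigeonhole: were every $x \gtrdot j_\ell$ to cover at least two elements, the sum would be at least $2(r+1) = 2r+2$, contradicting the bound $2r+1$. Hence some $x \gtrdot j_\ell$ has $c(x) = 1$; this $x$ covers only $j_\ell$, and we take it to be $j_{\ell+1}$. I expect the one genuinely delicate point to be the opening computation — pinning down $D(j_\ell) = j_{\ell-1}$ from the ``covers at most one element'' hypothesis and correctly reading off from (D1) the counts $r+1$ (covers of $j_\ell$) and $\leq r+1$ (covers of $j_{\ell-1}$); once these are in hand the counting argument closes with no room to spare.
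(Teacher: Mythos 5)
Your proof is correct. One thing to note up front: this paper never proves the proposition itself---it is imported from Miller--Reiner \cite{MR}---so the comparison is with the argument given there. That argument is the same pigeonhole in spirit but runs directly on the axioms: supposing every element $x\gtrdot j_m$ covers a second element $y_x\neq j_m$, axiom (D2) forces each $y_x$ to cover $j_{m-1}$ (a common upper cover of $j_m$ and $y_x$ requires a common lower cover, and $j_m$ covers only $j_{m-1}$) and also forces $x\mapsto y_x$ to be injective (two common upper covers of $j_m$ and $y_x$ would require two common lower covers), so the $r+1$ covers of $j_m$ guaranteed by (D1) inject into the at most $r$ elements other than $j_m$ covering $j_{m-1}$, a contradiction. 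Your route reaches the identical contradiction in aggregated form: applying $DU-UD=rI$ (which is exactly (D1) and (D2) packaged together) to the basis vector $j_\ell$ and then hitting it with the augmentation replaces the explicit injection by the single numerical identity $\sum_{x\gtrdot j_\ell}c(x)=r+|\{w: w\gtrdot j_{\ell-1}\}|$, after which (D1) and pigeonhole close the argument with the same margin of one. What your packaging buys is economy and consistency with the operator-theoretic style of the rest of the paper; what it costs is transparency, since the role of (D2) is hidden inside the commutation relation rather than visible in the combinatorics. Your induction scaffolding is also handled correctly: the new top pair $(j_\ell,j_{\ell+1})$ again satisfies the hypotheses, and since $j_\ell$ covers $j_{\ell-1}$ it has rank at least $1$, so the relation $DU_n-UD_n=rI$ (asserted for $n\geq 1$) legitimately applies at every step.
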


Noting that $r$ elements cover the bottom element of an $r$-differential poset and that all $1$-differential posets 
are isomorphic to Young's lattice $\Y$ up to rank $n=2$ (see Figure~\ref{Fig:Chains}), we have the following

\begin{corollary}\label{Cor:Def}
Let $P$ be an $r$-differential poset.  Then there exists a pair of chains
$$
\triv_{0} \lessdot \triv_1 \lessdot\triv_{2} \lessdot \cdots \lessdot \triv_n \lessdot \cdots
\qquad\text{and}\qquad
\sgn_{0} \lessdot \sgn_1 \lessdot \sgn_{2} \lessdot \cdots \lessdot \sgn_n \lessdot \cdots
$$
with the following properties:
\begin{enumerate}[(i)]
\item  $\rank(\triv_n)=\rank(\sgn_n)=n$;
\item  if $r=1$ then $\triv_n\neq\sgn_n$ for $n\geq 2$, while $\triv_0=\sgn_0$ and $\triv_1=\sgn_1$;
\item  if $r>1$ then $\triv_n\neq\sgn_n$ for $n\geq 1$, while $\triv_0=\sgn_0$;
\item  each $\triv_n$ and $\sgn_n$ covers at most one element of $P$.
\end{enumerate}
\end{corollary}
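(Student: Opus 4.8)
The plan is to build the two chains explicitly from the bottom of $P$, using Proposition~\ref{prop:join-irreducible-chains} to propagate the ``covers at most one element'' condition upward, and then to argue that this condition alone forces the two chains to stay distinct once they have separated. The whole point will be to locate a single \emph{branch rank} $b$ at which I can produce two distinct elements, each covering at most one element of $P$, and then let the Proposition do the rest.

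First I would analyze the bottom ranks. Since $\hat{0}$ covers no elements, condition (D1) shows it is covered by exactly $r$ elements, each of which covers only $\hat{0}$ and hence covers exactly one element. When $r>1$ this already supplies two distinct rank-$1$ elements: choosing two of them as $\triv_1,\sgn_1$ and setting $\triv_0=\sgn_0=\hat{0}$ produces covering pairs $\triv_0\lessdot\triv_1$ and $\sgn_0\lessdot\sgn_1$ in which both members cover at most one element, so the branch rank is $b=1$. When $r=1$ there is a unique rank-$1$ element, which I take as $\triv_1=\sgn_1$; here I would invoke that $P$ agrees with Young's lattice $\Y$ through rank $2$, so there are exactly two rank-$2$ elements (matching $(2)$ and $(1,1)$), each covering only the rank-$1$ element. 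Taking these as $\triv_2\neq\sgn_2$ gives covering pairs $\triv_1\lessdot\triv_2$ and $\sgn_1\lessdot\sgn_2$ with both members covering at most one element, and the branch rank is $b=2$.

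Next I would extend upward. Applying Proposition~\ref{prop:join-irreducible-chains} to each base covering pair---with $m=1$ when $r>1$ and $m=2$ when $r=1$---and iterating the one-step case $n=m+1$ (which preserves everything already constructed and again produces a covering pair whose members cover at most one element) yields infinite saturated chains $\triv_0\lessdot\triv_1\lessdot\cdots$ and $\sgn_0\lessdot\sgn_1\lessdot\cdots$ in which every $\triv_n$ and every $\sgn_n$ covers at most one element. This delivers (i) and (iv) at once.

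The remaining point, and the one I expect to be the real crux, is distinctness, i.e.\ (ii) and (iii). The key observation is that the covering-at-most-one property makes the chains rigid: in a saturated chain the element $\triv_k$ covers $\triv_{k-1}$, and if $\triv_k$ covers at most one element then $\triv_{k-1}$ is its \emph{unique} lower cover. Hence if $\triv_k=\sgn_k$ for some $k>b$, the common element has a single lower cover and so forces $\triv_{k-1}=\sgn_{k-1}$; downward induction then drives the coincidence all the way to rank $b$, contradicting $\triv_b\neq\sgn_b$. Therefore $\triv_n\neq\sgn_n$ for every $n\geq b$, which is exactly the range asserted in (iii) for $r>1$ and in (ii) for $r=1$, while $\triv_0=\sgn_0=\hat{0}$ (together with $\triv_1=\sgn_1$ when $r=1$) holds by construction.
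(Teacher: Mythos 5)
Your proposal is correct and follows essentially the same route as the paper: the paper derives the corollary from the fact that $\hat{0}$ is covered by $r$ elements (giving the branch at rank $1$ when $r>1$), the fact that every $1$-differential poset agrees with Young's lattice up to rank $2$ (giving the branch at rank $2$ when $r=1$), and Proposition~\ref{prop:join-irreducible-chains} to extend upward. Your explicit downward-induction argument for distinctness (using that each element covers at most one element) is a correct filling-in of a step the paper leaves implicit.
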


Fix one such pair of chains for each differential poset $P$, and refer to each using the notation of 
Corollary~\ref{Cor:Def}; see Figure~\ref{Fig:Chains}.    
Further, when considering the matrix of an operator $A\in{\mathrm{End}}_{\mathbb Z}(\mathbb Z P_n)$, 
it is understood that the \emph{standard basis} consisting 
of the elements of $P_n$ is to be considered, and ordered so 
that $\triv_n$ indexes the first row and column of the matrix.  
Lastly, we remark that the notation is motivated by Young's lattice, 
viewed as the Bratteli diagram associated to the tower $\{\mathbb C\mathfrak S_n\}_{n\geq 0}$, 
where $\triv_n$ and $\sgn_n$ correspond to the two linear representations of $\mathfrak S_n$ for $n\geq 2$.

\begin{figure}[hbt]
\includegraphics[width=\textwidth]{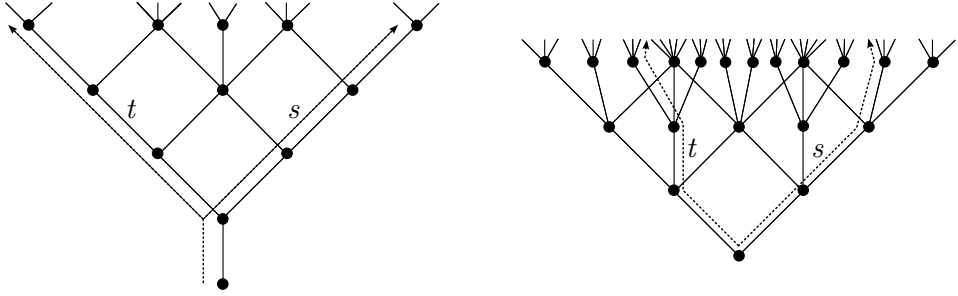}
\caption{An illustration of Corollary~\ref{Cor:Def} for a $1$-differential poset (left) and a $2$-differential poset (right).}\label{Fig:Chains}
\end{figure}

For nonnegative integers $r,k,\ell$, define 
\[\ell!_{r,k}:=(r\cdot\ell+k)(r\cdot(\ell-1)+k)\cdots(r\cdot 1+k)\]
with $0!_{r,k}:=1$.
\begin{proposition}\label{Prop:Fundamental}
Let $P$ be an $r$-differential poset, $k$ be a positive integer, and 
\begin{equation}\v_{n,k}:=\sum_{j=0}^n (-1)^j \frac{U^j \triv_{n-j}}{(j+1)!_{r,k}}.\label{Eqn:Fundamental}\end{equation}
Then $(DU_n+kI) \v_{n,k}=\triv_n$.
\end{proposition}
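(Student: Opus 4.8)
The plan rests on two facts about the chain $\triv_0 \lessdot \triv_1 \lessdot \cdots$. First, since each $\triv_\ell$ covers at most one element (property (iv) of Corollary~\ref{Cor:Def}) and since $\triv_{\ell-1} \lessdot \triv_\ell$, the element $\triv_\ell$ covers \emph{exactly} $\triv_{\ell-1}$ for $\ell\geq 1$; hence $D\triv_\ell = \triv_{\ell-1}$, while $D\triv_0 = 0$ because $\triv_0$ is the minimum. Second, I will use the defining commutation $DU - UD = rI$. These two facts are the entire structural input: the hypothesis that each $\triv_\ell$ covers at most one element is precisely what makes the down-map act on the chain by the clean shift $\triv_\ell \mapsto \triv_{\ell-1}$, and without it the telescoping below would collapse.

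Next I would establish the Weyl-algebra identity
$$ D\,U^{m} = U^{m}\,D + m\,r\,U^{m-1}, $$
valid on every rank, by induction on $m$. The base case $m=1$ is the relation $DU - UD = rI$, and the inductive step follows by writing $DU^{m} = (DU)\,U^{m-1} = (UD + rI)\,U^{m-1} = U\,(DU^{m-1}) + r\,U^{m-1}$ and applying the inductive hypothesis to $DU^{m-1}$. Since $DU_n - UD_n = rI$ holds at each rank $n$, the computation goes through uniformly despite the dependence of the operators on rank.

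With these in hand I would apply $DU_n + kI$ to $\v_{n,k}$ termwise. Using the Weyl-algebra identity with $m=j+1$ together with $D\triv_{n-j} = \triv_{n-j-1}$, one gets for $0\leq j\leq n-1$
$$ (DU+kI)\,U^{j}\triv_{n-j} = U^{j+1}\triv_{n-(j+1)} + \bigl(r(j+1)+k\bigr)\,U^{j}\triv_{n-j}, $$
while the endpoint $j=n$ contributes only $\bigl(r(n+1)+k\bigr)\,U^{n}\triv_0$, since $D\triv_0 = 0$. Summing against the coefficients $a_j := (-1)^{j}/(j+1)!_{r,k}$ and collecting the coefficient of each $U^{m}\triv_{n-m}$, the surviving target term $U^{0}\triv_n = \triv_n$ receives $a_0\,(r+k) = 1$ (because $1!_{r,k} = r+k$), whereas for $m\geq 1$ the coefficient is the sum of a ``diagonal'' contribution from $j=m$ and a ``shift'' contribution from $j=m-1$.

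The final step is to check that these cross-terms cancel. Using the recursion $(m+1)!_{r,k} = \bigl(r(m+1)+k\bigr)\,m!_{r,k}$, the diagonal contribution simplifies to $a_m\bigl(r(m+1)+k\bigr) = (-1)^{m}/m!_{r,k}$, which is exactly the negative of the shift contribution $a_{m-1} = (-1)^{m-1}/m!_{r,k}$. Hence every $U^{m}\triv_{n-m}$ with $m\geq 1$ cancels and only $\triv_n$ remains, giving $(DU_n+kI)\,\v_{n,k} = \triv_n$. I expect the only real obstacle to be the index bookkeeping in the telescoping sum and the correct treatment of the endpoint $j=n$ where the down-map annihilates $\triv_0$; everything else is forced once the shift identity $D\triv_\ell = \triv_{\ell-1}$ is in place.
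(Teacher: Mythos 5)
Your proof is correct, but it takes a genuinely different route from the paper's. The paper argues by induction on $n$: it rewrites $DU_{n+1}+kI$ as $UD+(r+k)I$, peels off the $j=0$ term of $\v_{n+1,k}$, recognizes the remaining sum as $-\frac{U}{r+k}\,\v_{n,r+k}$ via the factorial identity $(\ell+2)!_{r,k}=(r+k)\cdot(\ell+1)!_{r,r+k}$, and invokes the inductive hypothesis with the shifted parameter $r+k$ in place of $k$, finishing with $D\triv_{n+1}=\triv_n$. You instead give a direct, non-inductive computation at rank $n$: the commutation identity $DU^{m}=U^{m}D+mrU^{m-1}$ (proved by induction on $m$), the shift $D\triv_\ell=\triv_{\ell-1}$ from Corollary~\ref{Cor:Def}(iv), and the recursion $(m+1)!_{r,k}=\bigl(r(m+1)+k\bigr)\,m!_{r,k}$ make the sum telescope, leaving exactly $\triv_n$; your bookkeeping, including the endpoint $j=n$, checks out. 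What the paper's induction buys is brevity: it never needs the Weyl-algebra identity, at the price of the somewhat opaque parameter shift $k\mapsto r+k$ inside the induction. What your computation buys is transparency: it shows that the coefficients $1/(j+1)!_{r,k}$ in \eqref{Eqn:Fundamental} are precisely the ones forced by cancellation, rather than verifying a formula handed down in advance, and both proofs are seen to rest on the same structural input, namely that $\triv_\ell$ covers only $\triv_{\ell-1}$. One point you should make explicit: your commutation identity, when applied to $\triv_0$ (as the $j=n$ term requires), uses the relation $DU-UD=rI$ at rank $0$, whereas the paper asserts it only for $n\geq 1$. It does hold there, in the form $DU_0=rI$ with the convention $D_0=0$, because the $r$ atoms of $P$ each cover only $\hat{0}$; note that the paper's own base case $(DU_0+kI)\frac{\triv_0}{r+k}=\triv_0$ silently uses the same fact.
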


\begin{proof}
For $n=0$, we indeed have $(DU_0+kI)\frac{\triv_0}{r+k}=\triv_0$.  Inducting on $n$,
\begin{align*}
(DU_{n+1}+kI) \v_{n+1,k} 
&=(UD+(r+k)I)\left(\frac{\triv_{n+1}}{r+k}+\sum_{j=1}^{n+1}(-1)^j\frac{U^j \triv_{n+1-j}}{(j+1)!_{r,k}}\right)\\
&=
   (UD+(r+k)I)\left( \frac{\triv_{n+1}}{r+k}-\frac{U}{r+k} \sum_{\ell=0}^{n}(-1)^\ell \frac{U^\ell \triv_{n-\ell}}{(\ell+1)!_{r,r+k}} \right)\\
&= \frac{1}{r+k}\left((UD+(r+k)I) \triv_{n+1}-
U(DU+(r+k)I) \v_{n,r+k}\right)\\
&= \frac{1}{r+k}\left((UD+(r+k)I) \triv_{n+1}-
U\triv_{n}\right)\\
&=\frac{1}{r+k}\left(U\triv_n+(r+k)\triv_{n+1}-U\triv_n\right)\\
&=\triv_{n+1},
\end{align*}
where the fourth equality is by induction, and the fifth follows from Corollary~\ref{Cor:Def}.
\end{proof}

\begin{corollary}\label{Cor:Column}
Let $P$ be an $r$-differential poset, and let $k$ be a positive integer.  Then $DU_n+kI$ is invertible and 
the column vector of $\v_{n,k}$ forms the first column of the inverse $(DU_n+kI)^{-1}$.
\end{corollary}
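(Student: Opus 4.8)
The plan is to read off both claims directly from Theorem~\ref{eig} and Proposition~\ref{Prop:Fundamental}, the only genuine content being the passage from the existence of a solution of a linear system to the invertibility of its coefficient matrix.

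First I would settle invertibility. Substituting $t = k$ into Stanley's formula in Theorem~\ref{eig} gives
\[
\det(DU_n + kI) = \prod_{i=0}^n \bigl(k + r(i+1)\bigr)^{\Delta p_{n-i}}.
\]
Because $r \geq 1$ and $k$ is a positive integer, each base $k + r(i+1)$ is a positive integer; and because the rank sizes weakly increase ($p_0 \leq p_1 \leq \cdots$), each exponent $\Delta p_{n-i}$ is a nonnegative integer. Hence the determinant is a positive integer, in particular nonzero, so $DU_n + kI$ is invertible over $\QQ$.

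Next I would recall the basis convention fixed after Corollary~\ref{Cor:Def}: the standard basis of $\ZZ P_n$ is ordered so that $\triv_n$ indexes the first row and column. Viewed as a column vector, $\triv_n$ is therefore the first standard basis vector $e_1$. Now Proposition~\ref{Prop:Fundamental} reads $(DU_n + kI)\,\v_{n,k} = \triv_n = e_1$, and applying the inverse just constructed yields $\v_{n,k} = (DU_n + kI)^{-1} e_1$. By definition this is the first column of $(DU_n + kI)^{-1}$, which is exactly the assertion.

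I do not expect a genuine obstacle here, as the result is an immediate consequence of the two cited statements. The one point demanding care is that Proposition~\ref{Prop:Fundamental} alone does not yield the corollary: the equation $(DU_n + kI)\,\v_{n,k} = \triv_n$ would leave the solution ambiguous without invertibility of the coefficient matrix. It is precisely the positivity of the determinant in Theorem~\ref{eig}, which itself rests on the weak monotonicity of the rank function, that removes this ambiguity and makes the inverse, and hence its unique first column, well defined.
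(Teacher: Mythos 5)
Your proof is correct and follows exactly the paper's own route: invertibility from Theorem~\ref{eig} (the determinant being a product of positive factors), and the identification of $\v_{n,k}$ as the first column from Proposition~\ref{Prop:Fundamental} together with the convention that $\triv_n$ indexes the first basis vector. The paper states this in one sentence; you have merely filled in the same details explicitly.
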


\begin{proof}
The first claim follows from Theorem~\ref{eig}, and the second follows from Proposition~\ref{Prop:Fundamental} 
and our convention of ordering the basis so that $\triv_n$ indexes the first row and column of $DU_n+kI$.
\end{proof}

An integral matrix 
$D=(d_{ij})\in\Mat_{n\times m}(\mathbb Z)$ 
is said to be \emph{in Smith form} if it is diagonal in the sense that 
$d_{ij}=0$ for $i\neq j$, 
and its diagonal entries are nonnegative and satisfy 
$d_{11}|d_{22}|\cdots|d_{\min\{n,m\}}$, 
in which case we set 
$d_i:=d_{ii}$ for each $i$.  
Recall that every integral matrix 
$A\in\Mat_{n\times m}(\mathbb Z)$ 
can be brought into Smith form by an appropriate change of basis in 
$\mathbb Z^n$ and $\mathbb Z^m$, 
i.e. there exist matrices 
$P\in \GL_n(\mathbb Z)$ and $Q\in\GL_m(\mathbb Z)$ 
for which $PAQ=D$ is in Smith form.  
And though the matrices $P,Q$ are not 
necessarily unique, the resulting Smith form $D$ 
is, and its entries $d_1,\ldots, d_{\min\{n,m\}}$ are 
called the \emph{Smith entries of $A$}, with 
$d_{\min\{n,m\}}$ referred to as the \emph{last Smith entry}.
When $A$ is square and invertible, we have the following 
well-known characterization of this last entry.

\begin{proposition}[cf. \cite{MR}]\label{Prop:Inv}
Let $A$ be an $n\times n$ invertible (over $\mathbb Q$) integral matrix, 
and let $s$ be the smallest positive integer for which $s\cdot A^{-1}$ is integral.  
Then $d_n=s$.
\end{proposition}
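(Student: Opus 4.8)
The plan is to read off both $d_n$ and $s$ directly from a Smith decomposition of $A$. First I would fix $P,Q \in \GL_n(\mathbb Z)$ with $PAQ = D = \diag(d_1,\ldots,d_n)$ in Smith form. Since $A$ is invertible over $\mathbb Q$, none of the $d_i$ vanish, so $D$ is invertible and I can write $A^{-1} = Q D^{-1} P$, where $D^{-1} = \diag(d_1^{-1},\ldots,d_n^{-1})$.

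The crux is that multiplying by the unimodular factors $P$ and $Q$ neither creates nor destroys integrality. Concretely, for a positive integer $s$ one has $s A^{-1} = Q\,(s D^{-1})\,P$, and conversely $s D^{-1} = Q^{-1}\,(s A^{-1})\,P^{-1}$; since all four matrices $P,Q,P^{-1},Q^{-1}$ have integer entries, $s A^{-1}$ is integral if and only if $s D^{-1}$ is integral. As $s D^{-1}$ is the diagonal matrix with entries $s/d_i$, this holds exactly when $d_i \mid s$ for every $i$, i.e.\ when $\mathrm{lcm}(d_1,\ldots,d_n) \mid s$.

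Finally I would invoke the divisibility chain built into the Smith form: because $d_1 \mid d_2 \mid \cdots \mid d_n$, the least common multiple collapses to $d_n$. Hence the positive integers $s$ for which $sA^{-1}$ is integral are precisely the multiples of $d_n$, and the smallest such is $d_n$ itself, giving $s = d_n$.

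I do not expect a genuine obstacle here. The only point requiring a moment's care is the ``if and only if'' in the second paragraph, where one must use that \emph{both} the unimodular change-of-basis matrices \emph{and their inverses} are integral, so that integrality transfers in each direction; the rest is the standard observation that an $\mathrm{lcm}$ along a divisibility chain equals its top term.
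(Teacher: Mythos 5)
Your proof is correct. Note that the paper itself gives no argument for this proposition --- it is stated as a well-known fact with a pointer to \cite{MR} --- so there is nothing internal to compare against; your argument via a Smith decomposition $PAQ=D$, the two-way transfer of integrality through the unimodular factors $P,Q,P^{-1},Q^{-1}$, and the collapse of $\mathrm{lcm}(d_1,\ldots,d_n)$ to $d_n$ along the divisibility chain is exactly the standard proof of this characterization, and it is complete (including the observation that invertibility of $A$ forces all $d_i\neq 0$, so $D^{-1}$ makes sense).
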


\begin{theorem}
\label{theorem:last-smith-entry}
Let $P$ be an $r$-differential poset, let $k$ and $n$ be positive integers, and let $d_{p_n}$ denote the 
last Smith entry of $DU_n+kI$.  Then the following hold:
\begin{enumerate}[(i)]
\item $(n+1)!_{r,k}$ divides $d_{p_n}$ if $r\geq 2$;
\item $(n-1)!_{r,k}\cdot (n+1+k)$ divides $d_{p_n}$ if $r =1$.
\end{enumerate}
\end{theorem}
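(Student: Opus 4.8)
The plan is to combine Corollary~\ref{Cor:Column} with Proposition~\ref{Prop:Inv}. By Proposition~\ref{Prop:Fundamental} we have $\v_{n,k}=(DU_n+kI)^{-1}\triv_n$, so by Proposition~\ref{Prop:Inv} the vector $d_{p_n}\v_{n,k}$ is integral; consequently, for every integer covector $w$ the rational number $\langle w,\v_{n,k}\rangle$ has denominator dividing $d_{p_n}$. It therefore suffices to exhibit a single $w$ for which this denominator equals the divisor claimed in (i) or (ii). The natural guess, matching the symmetric-group heuristic, is to pair against the dual chain element $\sgn_n$.

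The key idea is to read the defining sum~\eqref{Eqn:Fundamental} through the down operator. Since $U=D^t$, pairing against $\sgn_n$ gives $\langle \sgn_n, U^j\triv_{n-j}\rangle=\langle D^j\sgn_n,\triv_{n-j}\rangle$. Because each $\sgn_m$ covers \emph{at most} one element by Corollary~\ref{Cor:Def}(iv), while it covers $\sgn_{m-1}$ along the fixed chain, it covers exactly one element for $m\ge 1$; hence $D\sgn_m=\sgn_{m-1}$ and, inductively, $D^j\sgn_n=\sgn_{n-j}$ for $0\le j\le n$. Thus the a priori $P$-dependent chain counts collapse: $\langle \sgn_n, U^j\triv_{n-j}\rangle=\langle\sgn_{n-j},\triv_{n-j}\rangle$, which equals $1$ when $\sgn_{n-j}=\triv_{n-j}$ and $0$ otherwise.

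It remains to record, via Corollary~\ref{Cor:Def}(ii)--(iii), exactly which $j$ survive. If $r\ge 2$ then $\triv_m\ne\sgn_m$ for all $m\ge 1$, so only $j=n$ contributes and $\langle\sgn_n,\v_{n,k}\rangle=(-1)^n/(n+1)!_{r,k}$, a fraction with numerator $\pm1$ and denominator $(n+1)!_{r,k}$; this proves (i). If $r=1$ then additionally $\triv_1=\sgn_1$, so $j\in\{n-1,n\}$ contribute, giving $\langle\sgn_n,\v_{n,k}\rangle=(-1)^{n-1}/n!_{1,k}+(-1)^n/(n+1)!_{1,k}$, which simplifies (using $(n+1)!_{1,k}=(n+1+k)(n+k)(n-1)!_{1,k}$ and cancelling the factor $n+k$) to $(-1)^{n-1}/\bigl((n-1)!_{1,k}(n+1+k)\bigr)$; its denominator is $(n-1)!_{1,k}(n+1+k)$, proving (ii).

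The main obstacle here is conceptual rather than computational: recognizing that pairing the first column $\v_{n,k}$ against $\sgn_n$ and pushing the operators across via $U=D^t$ turns the unknown coefficients $\langle\sgn_n,U^j\triv_{n-j}\rangle$ into the indicator of $\sgn_{n-j}=\triv_{n-j}$, thereby eliminating all dependence on the particular poset $P$. Once the relation $D^j\sgn_n=\sgn_{n-j}$ is established, the remainder is the elementary denominator bookkeeping above, the only mild care being the $r=1$ cancellation that trades $n!_{1,k}$ for $(n-1)!_{1,k}(n+1+k)$.
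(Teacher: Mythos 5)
Your proposal is correct and follows essentially the same route as the paper: reduce to the first column $\v_{n,k}$ of $(DU_n+kI)^{-1}$ via Proposition~\ref{Prop:Inv} and Corollary~\ref{Cor:Column}, then pair against $\sgn_n$ using $U=D^t$ and Corollary~\ref{Cor:Def} to read off the denominator $(n+1)!_{r,k}$ (resp.\ $(n-1)!_{1,k}(n+1+k)$ after the $n+k$ cancellation). The only difference is cosmetic: the paper additionally verifies that the claimed divisor times $\v_{n,k}$ is integral (so it is exactly the smallest such multiplier), whereas you correctly observe that only the lower-bound direction from the pairing is needed for the divisibility statement.
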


\begin{proof}  
By Proposition~\ref{Prop:Inv}, $d_{p_n}$ is the smallest positive integer for which 
\[d_{p_n}\cdot(DU_n+kI)^{-1}\in \Mat_{p_n\times p_n}(\mathbb Z).\]
It thus suffices to show that the claimed divisor $d$ of $d_{p_n}$ is the 
smallest positive integer $s$ for which the first column of $s\cdot (DU_n+kI)^{-1}$ is integral, 
or equivalently $s\cdot \v_{n,k}\in \mathbb Z P_n$ by Corollary~\ref{Cor:Column}.

For $r\geq 2$ it is clear that $d\cdot\v_{n,k}\in\mathbb Z P_n$ by~\eqref{Eqn:Fundamental}.  Moreover, by Corollary~\ref{Cor:Def}
\[
\left\langle\, U^j \triv_{n-j}\ ,\ \sgn_n\, \right\rangle 
=\left\langle\, \triv_{n-j}\ ,\ D^j\sgn_n\, \right\rangle
=\left\langle\, \triv_{n-j}\ ,\ \sgn_{n-j}\, \right\rangle=
\begin{cases}
1 & \text{ if $j=n$}\\
0 & \text{ if $j\leq n-1$,}
\end{cases}
\]
where $\langle -,-\rangle$ denotes the bilinear form obtained by decreeing that $\langle x,y\rangle=\delta_{x,y}$ for $x,y\in P_n$.  It follows that $d$ is the smallest positive integer 
for which $d\cdot\v_{n,k}\in\mathbb Z P_n$.

Similarly, for $r=1$ we have that
\begin{equation}\label{Eq:r1}
\left\langle\, U^j \triv_{n-j}\ ,\ \sgn_n\, \right\rangle =
\begin{cases}
1 & \text{ if $j=n$ or $n-1$}\\
0 & \text{ if $j\leq n-2$.}
\end{cases}
\end{equation}
Considering the $j=n-1$ and $j=n$ terms of $\v_{n,k}$, we have
\[\frac{U^{n-1} \triv_{1}}{n!_{1,k}}-\frac{U^n \triv_{0}}{(n+1)!_{1,k}}
= \frac{U^{n} \triv_{0}}{n!_{1,k}}- \frac{U^n \triv_{0}}{(n+1)!_{1,k}}
= \frac{U^n\triv_0}{(n-1)!_{1,k}\cdot (n+1+k)},
\]
and thus 
\begin{equation}\label{Eq:r1:2}
\v_{n,k}=\sum_{j=0}^{n-2} (-1)^j \frac{U^j \triv_{n-j}}{(j+1)!_{1,k}}+(-1)^{n-1} \frac{U^n\triv_0}{(n-1)!_{1,k}\cdot (n+1+k)}.
\end{equation}
It follows that indeed $d\cdot\v_{n,k}\in\mathbb Z P_n$.  Further,~\eqref{Eq:r1} and~\eqref{Eq:r1:2} together imply that 
$d$ is the smallest such integer.
\end{proof}

\begin{corollary}
Let $P$ be an $r$-differential poset.  Then $p_1<p_2<p_3<\cdots$.
\end{corollary}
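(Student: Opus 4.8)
The plan is to show that $\Delta p_n := p_n - p_{n-1} \geq 1$ for all $n \geq 2$ (and in fact for all $n \geq 1$ when $r \geq 2$), which is exactly the assertion $p_1 < p_2 < p_3 < \cdots$. Since the eigenvalues of $DU_n + kI$ are the positive numbers $r(i+1)+k$, this matrix is positive definite, so its determinant is the product $d_1 d_2 \cdots d_{p_n}$ of its Smith entries; in particular the last Smith entry $d_{p_n}$ divides $\det(DU_n + kI)$. Combining this with the lower bound on $d_{p_n}$ from Theorem~\ref{theorem:last-smith-entry} and the explicit factorization $\det(DU_n + kI) = \prod_{i=0}^{n}(k + r(i+1))^{\Delta p_{n-i}}$ of Theorem~\ref{eig}, I would obtain, for every positive integer $k$, a divisibility relation between two products of linear factors in $k$, which I then probe one prime at a time.

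The crucial observation is that in Stanley's factorization the factor $(k+r)$ (the $i=0$ term) carries the exponent $\Delta p_n$ --- precisely the quantity I want to bound below --- whereas the divisor supplied by Theorem~\ref{theorem:last-smith-entry} contains $(k+r)$ only to the first power. So the strategy is to choose $k$ together with a prime $q$ so that $q$ divides the factor $(k+r)$ but divides none of the remaining factors $(k + r\ell)$ for $2 \leq \ell \leq n+1$. Passing to $q$-adic valuations $\nu_q$, all terms except the $(k+r)$-term drop out on both sides, and the divisibility $d_{p_n} \mid \det(DU_n + kI)$ collapses to $\nu_q(k+r) \leq \Delta p_n \cdot \nu_q(k+r)$, forcing $\Delta p_n \geq 1$.

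Concretely, for $r \geq 2$ I would fix any prime $q > rn$ and set $k = q - r$ (a positive integer since $q > r$). Then $k + r = q$, while for $2 \leq \ell \leq n+1$ we have $k + r\ell = q + r(\ell-1)$ with $0 < r(\ell-1) \leq rn < q$, so $q \nmid (k + r\ell)$; the valuation argument then yields $\Delta p_n \geq 1$ for every $n \geq 1$. For $r = 1$ the divisor from Theorem~\ref{theorem:last-smith-entry}(ii) is $(k+1)\cdots(k+n-1)\cdot(k+n+1)$, which again contains the factor $(k+1)$ carrying exponent $\Delta p_n$; choosing a prime $q > n$ and $k = q-1$ makes $q$ divide $(k+1)=q$ but none of $(k+\ell)$ for $2 \leq \ell \leq n+1$ (as that would force $q \mid \ell - 1$ with $0 < \ell - 1 \leq n < q$), and the same collapse gives $\Delta p_n \geq 1$ for all $n \geq 2$. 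This suffices: for $r \geq 2$ one gets $p_0 < p_1 < p_2 < \cdots$, and for $r = 1$ one gets $p_1 < p_2 < p_3 < \cdots$ (here $p_0 = p_1$, so $n \geq 2$ is the correct range).

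I expect the main obstacle to be the isolation step --- arranging the arithmetic of $k$ and $q$ so that exactly one linear factor survives the valuation. The real content is the realization that Theorem~\ref{theorem:last-smith-entry} provides a divisor which is essentially the ``squarefree part'' $\prod_\ell (k + r\ell)$ of the determinant, so that any factor appearing with exponent $0$ in the determinant (i.e.\ any $\Delta p_n = 0$) would violate the divisibility once that factor is isolated by a suitable prime. Everything else is a short valuation computation together with bookkeeping of the ranges of $\ell$.
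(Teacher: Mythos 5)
Your proposal is correct and follows essentially the same route as the paper: invoke Theorem~\ref{theorem:last-smith-entry} with $k$ chosen so that $k+r$ is a prime exceeding $rn$ (the paper takes a prime exceeding $(n+1)r$, an immaterial difference), note that this prime divides the last Smith entry and hence $\det(DU_n+kI)$, and then use Stanley's factorization from Theorem~\ref{eig} to see that the prime can only divide the factor $(k+r)^{\Delta p_n}$, forcing $\Delta p_n\geq 1$. Your phrasing via $q$-adic valuations and your remark that the divisor of Theorem~\ref{theorem:last-smith-entry} is essentially the squarefree part of the determinant are just repackagings of the paper's argument, not a different method.
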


\begin{proof}  The following argument was given by Reiner and the author in~\cite{MR}, verbatim, but in the context of 
differential posets associated with towers of group algebras; see~\cite[Proof of Cor. 6.13]{MR}.  

For a fixed $n \geq 2$, to show that $\Delta p_n > 0$, 
apply Theorem~\ref{theorem:last-smith-entry} with the positive integer $k$
chosen so that $r+k$ is a prime that divides {\it none}
of 
$$
2r+k, \, 3r+k, \, \ldots, \, (n+1)r+k
$$
(e.g. pick $p$ to be a
prime larger than $(n+1)r$ and take $k=p-r$).
Theorem~\ref{theorem:last-smith-entry} tells us that the prime $r+k$ divides the last 
Smith form entry $d_{p_n}$ for $DU_n+kI$ over $\ZZ$,
and hence it also divides 
$$
\det(DU_n+kI) = 
(r+k)^{\Delta p_n} (2r+k)^{\Delta p_{n-1}} \cdots (nr+k)^{\Delta p_1} ((n+1)r+k)^{p_0}
$$
according to Theorem~\ref{eig}.
Since $r+k$ is a prime that can only divide the first factor on the right, it must be
that $\Delta p_n > 0$.
\end{proof}

\section{Acknowledgements}
The author would like to thank Kevin Dilks for carefully reviewing this work, and his advisor 
Victor Reiner for helpful edits.


\end{document}